\shorttitle{Stochastic matching model}
\def\/{\, | \,}
\def\N{{\mathbb N}}
\def\Z{{\mathbb Z}}
\newcommand{\cM}{{\mathcal M}^+}
\def\v{{\--}}
\def\pv{{\not\!\!\--}}
\newcommand{\R}{{\mathbb R}}
\newcommand{\UnN}{{\mathcal V}}
\def\esp#1{{\mathbf E}\left[#1\right]}
\def\Z{{\mathbb Z}}
\def\I{{\mathbb F}}
\def\I{{\mathbb I}}
\newcommand\tend{{\underset{n \rightarrow \infty}{\longrightarrow}}}
\newcommand\Halmos{\rule[0.4pt]{4pt}{4pt}}
\newcommand\maI{{\mathcal I}}
\newcommand\maE{{\mathcal E}}
\newcommand\maV{{\mathcal V}}
\newcommand\maU{{\mathcal U}}
\def\eref#1{(\ref{#1})}
\newsavebox{\fmbox}
\newenvironment{fmpage}[1]
     {\begin{lrbox}{\fmbox}\begin{minipage}{#1}}
     {\end{minipage}\end{lrbox}\fbox{\usebox{\fmbox}}}
\newenvironment{itemi}
{%
  \begin{list}{$\bullet$}%
  {\noindent%
    \usecounter{enumi}%
    \setlength{\topsep}{2pt}%
    \setlength{\partopsep}{0pt}%
				\setlength{\itemsep}{2pt}%
    \setlength{\parsep}{0pt}%
    \setlength{\leftmargin}{2.5em}%
    \setlength{\labelwidth}{1.5em}%
    \setlength{\labelsep}{0.5em}%
    \setlength{\listparindent}{0pt}%
    \setlength{\itemindent}{0pt}%
  }%
}%
{\end{list}}%
\begin{document}

\title{Stability of the stochastic matching model}

\authorone[CNRS, UPMC]{Jean Mairesse} 
\addressone{CNRS, LIP6 UMR 7606, Sorbonne Universit\'es, UPMC Univ Paris 06, Paris, France}
\authortwo[UTC and Northwestern University]{Pascal Moyal} 
\addresstwo{LMAC, Universit\'e de Technologie de Compi\`egne, Compi\`egne,  
France and\\  
IEMS Department,  
Mc Cormick School of Engineering, Northwestern University, Evanston, USA}

\begin{abstract}
We introduce and study a new model that we call the {\em matching
  model}. 
Items arrive one by one in a buffer and
depart from it as soon as possible but by pairs. The items of a departing pair are said to be
{\em matched}. There is a finite set of classes $\maV$ for the
items, and the allowed matchings depend on the classes, according to a
{\em matching graph} on $\maV$.  
Upon arrival, an item may find several possible matches in the
 buffer. This indeterminacy is resolved by a {\em matching policy}. 
When the sequence of classes of the arriving items is i.i.d., the sequence of
buffer-contents is a Markov chain, whose stability is investigated. In
particular, we prove that the model may be stable if and only if the
matching graph is non-bipartite.
\end{abstract}
\keywords{Markovian queueing theory, stability,  matching, graphs} 


\section{{\bf Introduction}}

A {\em matching model}, as described in the abstract, is formally
specified by a triple $(G,\Phi,\mu)$ formed by: 
\begin{itemi}
 \item a {\em matching graph} $G=(\maV,\maE)$, that is, an undirected
   graph whose vertices $\maV$ are the classes of items and whose
edges $\maE$ are the allowed matchings between classes; 
\item a {\em matching policy} $\Phi$ which defines the new
  buffer-content given the pair formed by the old buffer-content and the
  arriving item; 
\item a probability $\mu$ on $\maV$, the common law of the
  i.i.d. classes of the arriving items. 
\end{itemi}

The sequence of buffer-contents forms a Markov chain. The stability
problem consists in determining the conditions on $(G,\Phi,\mu)$ for
the Markov chain to be positive recurrent.

As such, despite being simple and natural, the matching model seems to be 
original. It has a queueing model flavor, with the crucial
specificity that items play the roles of both customers and
servers. 
In spirit, it is related to the general models of ``constrained queueing networks''~\cite{TaEp}, ``input-queued
cross-bar switches''~\cite{MMAW}, or ``call centers with skills-based
routing''~\cite[Section 5]{GKM}.

The present model can be seen as a particular case in discrete time, of the {\em matching queues} introduced 
in \cite{GurWa}, where items may be matched by groups of more than two, and where a control is performed to minimize the holding cost, 
allowing to keep 'matchable' jobs in line, in order to wait for a more profitable match in the future. However, our approach is widely different in that we consider a fixed matching policy, which prohibits the type of control studied in \cite{GurWa}.

The closest connection with existing models in the literature has to be made with the 
recent ``bipartite matching model'' (BM). This connection plays a central role in several proofs. The BM has been introduced in~\cite{CKW09}, see also \cite{AdWe,ABMG}. In this context, items arrive by pairs in a
buffer and depart from it, as soon as possible, also by pairs. There is a
finite number of classes partitioned into ``customer'' classes and
``server'' classes. Each pair, arriving or departing, is formed by
exactly one customer and one server. For departing pairs, an
additional requirement is that the customer and the server should be
matched, with the allowed matchings depending on the classes only.  The
sequence of classes of arriving items is i.i.d. and, in each arriving pair, the
customer is independent of the server. In \cite{BGMa12}, the same
model is studied without the restriction that the arriving customer
and server should be independent. For convenience, let us denote this
last model by EBM (extended BM). 

Clearly, the (E)BM model and the matching model are close. In fact, 
the matching model may be viewed as a particular
case of the EBM model. Indeed, consider a 
matching model with graph $(\maV, \maE)$ and sequence of arriving items
$(v_n)_n$. Let $\widetilde{\maV}$ be a disjoint copy of
$\maV$. Define a bipartite matching model with customer classes
$\maV$, server classes $\widetilde{\maV}$, possible matches
$\{(u,\tilde{v}) \mid (u,v)\in \maE\}$, and arriving sequence
$(v_n,\widetilde{v}_n)_n$. If the matching policies are the same,
then, at any time, 
the buffer-content of the bipartite matching model is
$(U,\widetilde{U})$ if the buffer-content of the original matching
model is $U$. In this bipartite matching model, there is a perfect
correlation between the arriving customer and server, so this is indeed
an EBM model and not a BM model. 

Due to the above connection, we can transfer 
several results proved for the EBM in \cite{BGMa12} to the matching
model. But, on the other hand, we are able to get more precise results in the present context.

\medskip

{\bf Content.} 
Isolating the matching model as an interesting
object of study is the first contribution of the present paper. 
The
second contribution is to show that the matching model may be stable
if and only if the matching graph is non-bipartite (Theorem
\ref{th-main-alea}-\eref{eq-non-bip}). 

In a nutshell, the situation
is as follows. A connected graph $G$ is either bipartite or
not. In the first case, we may construct a stable {\em bipartite}
matching model on $G$ (see \cite{BGMa12}) but not a stable matching model. In
the second case, we may construct a stable matching model on $G$ (and the
bipartite matching model is not even defined). 
Additional results are provided for matching
models on a non-bipartite matching graph: $(i)$ the model is always stable under the natural conditions for
the ``match the 
longest'' policy (Theorem \ref{th-main-alea}-\eref{eq-non-bip-ml});
$(ii)$ this is not true for all matching policies (Proposition \ref{pr-stabAB}). 
This last result is reminiscent of queueing systems which do not achieve their full capacity region, see for instance the model 
with re-entrant lines in \cite{Kum93}. 
The result $(i)$ on the optimality of  ``match the 
longest'', has connections with the result in Tassiulas $\&$
Ephremides \cite{TaEp} stating that in their ``constrained queueing network'', the ``max-weight'' policy  has a maximal stability region. 
Also, the proofs have the same flavor, as they both use a quadratic Lyapunov function.

\medskip

{\bf Convention.}
By default, a {\em graph} is finite simple and undirected, that is, of the form $G=(\maV,\maE)$, 
with $0<\# \maV <\infty$ and 
$\maE\subset (\maV\times \maV)\setminus
\{(v,v), v\in \maV\}$, with $(u,v)\in \maE$ $\implies$ $(v,u)\in \maE$.  Write $u \v v$ for $(u,v) \in \maE$
and $u \pv v$ for $(u,v) \not\in \maE$. 
For $U \subset \maV$, define 
\[
U^c = \maV \setminus U, \qquad \maE(U) = \{ v \in \maV \mid \exists u \in U, \ u
\-- v\}\:.
\]
For $u\in V$, write $\maE(u)=\maE(\{u\})$. For $U\subset \maV$, the {\em
  subgraph induced by} $U$ is the graph $(U,\maE\cap (U\times U))$. 
%
An {\em independent set} of a graph $G$ is a non-empty subset $\maI \subset \maV$ 
which does not include any pair of neighbors, {\em i.e.} : $\bigl(\forall i\neq j \in \maI, \ i \pv j\bigr)$. 
Let $\I$ be the set of independent sets of $G$. 

Given a finite set $S$, denote by $\cM(S)$ the set of probability
measures $\mu$ on $S$ such that for all $i$ in $S$, $\mu(i)>0$. 

Denote by $\N$ the set of non-negative integers. 
Let $A^*$ be the set of finite words over the alphabet $A$. For any
word $w \in A^*$ and
any  letter $a\in A$, let $|w|_a$ be the number
of occurrences of $a$ in $w$. Let $|w|=\sum_{a\in A} |w|_a$ be the
{\em length} of $w$. Let $[w]:=(|w|_a)_{a\in A}\in \N^A$ be the {\em commutative
image} of $w$. 


\section{{\bf The matching model}}
\label{sec:intro}

%
%
The {\em matching model} associated with a
  graph $G$, called the {\em matching graph}, is defined as follows. 
Start with an empty ``buffer'' and, for any $n$ in $\N$, draw an
element $v_n$ of $\maV$ and apply the following rule:  $(i)$ if there is no
element $j$ of $\UnN$  in the buffer such that $v_n \--j$, then add $v_n$ to the buffer; 
$(ii)$ otherwise, do not add $v_n$ and
remove from the buffer an element $j$ such that $v_n\-- j$ (we say that $v_n$ and $j$ are {\em matched} 
together). If several elements $j$ of the buffer are such that $v_n\-- j$, the one to be
removed depends on a {\em matching policy} to be specified. 

\medskip

The sequence $(v_n)_{n\in \N}$ is assumed to be independent and
identically distributed (i.i.d.). 
Throughout the paper, we denote by $\mu$ the common law over $\UnN$ of the
elements $v_n,\,n\in\N$. We always assume that $\mu \in \cM(\maV)$.

\medskip

The stability problem of the matching model can be described in the following
rough terms: what are the conditions on $G$, the matching policy, and
the distribution $\mu$ such that the system is stable, in the sense that the buffer reaches an
equilibrium behavior?

\begin{example}\rm \label{ex-theexample}
Consider the matching graph $G=(\maV,\maE)$ with $\maV=\{1,2,3,4\}$ and 
$\maE =\{(1,2),(2,3),(2,4),(3,4)\}$, see Figure \ref{fi-example}. 
\begin{figure}[h!]
\begin{center}
\begin{tikzpicture}
\draw[-] (2,3) -- (2,2);
\draw[-] (2,2) -- (1,1);
\draw[-] (2,2) -- (3,1);
\draw[-] (1,1) -- (3,1);
\fill (2,3) circle (2pt) node[right] {\small{1}} ;
\fill (2,2) circle (2pt) node[right] {\small{2}} ;
\fill (1,1) circle (2pt) node[below] {\small{3}} ;
\fill (3,1) circle (2pt) node[below] {\small{4}} ;
\end{tikzpicture}
\vspace*{-0.3cm}
\caption[smallcaption]{The matching graph of Example \ref{ex-theexample}.}
\label{fi-example}
\end{center}
\end{figure}
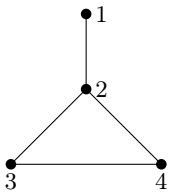

Consider the sequence $(v_n)_n
= 3,1,4,2,4,4,1,2,3,2,1,4,\dots$. Denote by $U_n$ the ordered sequence
of elements in the buffer after
the arrivals of $v_0,\dots, v_{n-1}$. We have
\[
U_1 = (3), \ U_2= (3,1), \ U_3 = (1), \ U_4 = \emptyset,
\ U_5 = (4\}, \ U_6 = (4,4), \ U_7 = (4,4,1), 
\]
and $U_8$ depends on the matching policy: indeed $v_7=2$ can be
matched either with $v_4=4$, $v_5=4$, or with $v_6=1$. A convenient way of
visualizing the dynamic is given in Figure \ref{fi-evolution} (assuming that $v_7$
is matched with $v_6$),

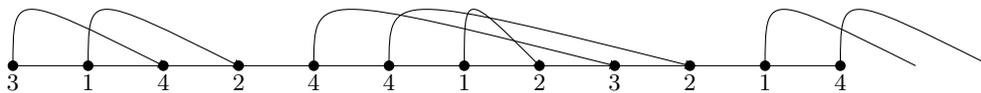
\begin{figure}[htb]
\begin{center}
\begin{tikzpicture}
\draw[-] (0,2) -- (11,2);
\fill (0,2) circle (2pt) node[below] {\small{3}};
\draw[->, thin] (0,2) .. controls +(up:1cm)  .. (2,2);
\fill (1,2) circle (2pt) node[below] {\small{1}};
\draw[->, thin] (1,2) .. controls +(up:1cm)  .. (3,2);
\fill (2,2) circle (2pt) node[below] {\small{4}};
\fill (3,2) circle (2pt) node[below] {\small{2}};
\fill (4,2) circle (2pt) node[below] {\small{4}};
\draw[->, thin] (4,2) .. controls +(up:1cm)  .. (8,2);
\fill (5,2) circle (2pt) node[below] {\small{4}};
\draw[->, thin] (5,2) .. controls +(up:1cm)  .. (9,2);
\fill (6,2) circle (2pt) node[below] {\small{1}};
\draw[->, thin] (6,2) .. controls +(up:1cm)  .. (7,2);
\fill (7,2) circle (2pt) node[below] {\small{2}};
\fill (8,2) circle (2pt) node[below] {\small{3}};
\fill (9,2) circle (2pt) node[below] {\small{2}};
\fill (10,2) circle (2pt) node[below] {\small{1}};
\draw[-, thin] (10,2) .. controls +(up:1cm)  .. (12,2);
\fill (11,2) circle (2pt) node[below] {\small{4}};
\draw[-, thin] (11,2) .. controls +(up:1cm)  .. (13,2);
\end{tikzpicture}
\caption[smallcaption]{The matching model in action, on the matching graph of Figure \ref{fi-example}.} 
\label{fi-evolution}
\end{center}
\end{figure}
\end{example}


\section{{\bf Structural properties of the matching graph}}
\label{sec:structure}

The conditions \textsc{Ncond}, defined below, will turn out to be necessary for the stability of the matching
model (Proposition \ref{thm:mainmono} hereafter). 
This justifies a thorough study of these conditions, which is the purpose of this section. 

\medskip

Let $G=(\maV,\maE)$ and let $\mu \in \cM(\maV)$. Define the following conditions on $\mu$~:


 \noindent
 \begin{center}
 \begin{fmpage}{8.5cm}
$ \qquad \textsc{Ncond}(G): \qquad \forall \maI \in \I, \ \ \  \mu(\maI) \ < \
 \mu\left(\maE(\maI)\right). $
 \end{fmpage}
 \end{center}
\medskip

\noindent We first observe the following, 

\begin{lemma}
\label{lemma:equivNCOND}
For any connected graph $G$ and $\mu \in \cM(\maV)$, {\sc Ncond}$(G)$ is equivalent to 
\begin{equation}
\label{eq:equivNCOND}
\forall U \subset \maV, \ U \ne \emptyset, \ U \ne \maV, \qquad 
\mu(U) \ < \ \mu\left(\maE(U)\right).
\end{equation}
\end{lemma}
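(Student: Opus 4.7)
The plan is to prove the two implications separately, with the forward direction being immediate and the reverse relying on a clean reduction to facets via the ``interior'' of a subset.

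For the easy direction, \eqref{eq:equivNCOND} $\Rightarrow$ \textsc{Ncond}, I just need to observe that every facet is a non-empty proper subset of $\maV$. Non-emptiness is in the definition of a facet. Properness follows because $G$ is connected with $|\maV|\ge 2$, so $\maV$ itself carries at least one edge and thus cannot be independent. So \eqref{eq:equivNCOND} applied to any facet reproduces \textsc{Ncond}.

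For the harder direction, \textsc{Ncond} $\Rightarrow$ \eqref{eq:equivNCOND}, I fix an arbitrary non-empty proper $U\subset\maV$ and consider
\[
I \;:=\; U\setminus \maE(U),
\]
the set of vertices of $U$ that have no neighbor inside $U$. The crucial observation is that $I$ is automatically an independent set: two vertices of $I$ both lie in $U$ but have no neighbor in $U$, so they are non-adjacent. Hence, whenever $I\ne\emptyset$, $I$ is a facet.

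I then split into two cases. If $I\ne\emptyset$, every neighbor of a vertex of $I$ lies outside $U$ (by definition of $I$) and inside $\maE(U)$ (since $I\subset U$), so $\maE(I)\subseteq \maE(U)\setminus U$. Applying \textsc{Ncond} to the facet $I$ gives
\[
\mu(U\setminus \maE(U)) \;=\; \mu(I) \;<\; \mu(\maE(I)) \;\le\; \mu(\maE(U)\setminus U),
\]
which, after adding $\mu(U\cap\maE(U))$ to both sides, is exactly $\mu(U)<\mu(\maE(U))$. If $I=\emptyset$, then $U\subseteq\maE(U)$; since $G$ is connected and $U$ is a proper non-empty subset, there is an edge joining $U$ to $\maV\setminus U$, so $\maE(U)\setminus U\ne\emptyset$. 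Because $\mu\in\cM(\maV)$ has full support, $\mu(\maE(U)\setminus U)>0$, giving $\mu(\maE(U))=\mu(U)+\mu(\maE(U)\setminus U)>\mu(U)$.

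The only slightly delicate point is noticing that the natural object to extract from $U$ is $U\setminus\maE(U)$ rather than, say, a maximal independent subset of $U$; the former automatically gives $\maE(I)\subseteq\maE(U)\setminus U$, which is what makes the inequality collapse cleanly. Neither connectedness nor full support of $\mu$ is used in the $I\ne\emptyset$ branch, and both are essential in the $I=\emptyset$ branch, which makes the dichotomy natural.
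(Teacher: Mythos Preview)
Your proof is correct, and at its core it uses the same idea as the paper's argument: isolate the vertices of $U$ that have no neighbor inside $U$, observe this set is a facet, and apply \textsc{Ncond} there. In fact your set $I=U\setminus\maE(U)$ is exactly the paper's $U_1$ (the union of the size-one connected components of the induced subgraph on $U$), and your $U\cap\maE(U)$ is the paper's $U_2$.

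The difference is organizational. The paper first disposes of the case where $U$ is a facet, then splits the remaining cases according to whether the induced subgraph on $U$ is connected, and only inside the disconnected case introduces the decomposition $U=U_1\cup U_2$. Your dichotomy $I=\emptyset$ versus $I\ne\emptyset$ absorbs all of these cases at once: $I=\emptyset$ handles simultaneously the connected-induced-subgraph case and the $U_1=\emptyset$ sub-case, while $I\ne\emptyset$ handles both the facet case and the $U_1\ne\emptyset$ sub-case uniformly. Your inequality $\maE(I)\subseteq\maE(U)\setminus U$ is slightly sharper than the paper's $\maE(U_1)\cup U_2\subseteq\maE(U)$, which is what lets you avoid the extra bookkeeping. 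The net effect is a shorter and more transparent argument for the same lemma.
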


\medskip

\proof 
Fix $\mu \in \cM(\maV)$. It is clear that (\ref{eq:equivNCOND}) entails {\sc Ncond}$(G)$, let us
focus on the converse. 
Consider $U \subset \maV, \ U \ne \emptyset, \ U \ne \maV$, such that $U$
is not an independent set. Notice that this implies in particular that $|U|>1$. 

(i) Assume first that the subgraph induced by $U$ is connected. Then,  
$\forall u\in U, \exists v\in U, \ u \-- v$. 
This implies that $U\subset \maE(U)$. Also, since $G$ is connected and 
$U\neq \maV$, we have that $U\varsubsetneq \maE(U)$. Therefore, $\mu(U) < \mu(\maE(U))$. 

(ii) Assume now that the subgraph induced by $U$ has several connected
components. Let $U = U_1\,\cup\, U_2$, where $U_1$ is the union of the connected components of cardinality 1, 
and $U_2$ is the union of the other connected components. The set $U_2$ is non-empty, 
otherwise $U$ would be an independent set. Moreover, $\forall u\in U_2, \exists v\in U_2, \ u \v v$, 
hence exactly as in case (i), we have that
\begin{equation}
\label{eq:connexcomp1}
U_2 \varsubsetneq \maE\left(U_2\right). 
\end{equation}
Now, if $U_1$ is empty, we have $U_2=U$ and (\ref{eq:connexcomp1}) allows to conclude. If not, 
$U_1$ is an independent set and from {\sc Ncond}$(G)$, we get 
$\mu(U_1) < \mu\left(\maE\left(U_1\right)\right)$. 
Also, (\ref{eq:connexcomp1}) entails that $\bigl( \maE(U_1)\cup U_2\bigr) \subset \maE(U)$, and since   
by definition, $\maE(U_1)\cap U_2 =\emptyset$, we obtain that
\[
\mu(U) = \mu(U_1) + \mu(U_2) < \mu(\maE(U_1)) + \mu(U_2) =
\mu(\maE(U_1)\cup U_2)  \leq \mu(\maE(U)) \:,
\]
which concludes the proof. \Halmos
\endproof

With some abuse, let us denote by \textsc{Ncond}$(G)$, the subset of probability measures   
$\mu \in \cM(\maV)$ satisfying the condition \textsc{Ncond}$(G)$.

\begin{figure}[h!]
\vspace{-0.4cm}
\begin{center}
\begin{tikzpicture}
\fill (0,0) circle (1.5pt) node[below left]{0} ;
\draw[->] (0,0) -- (0,3.5);
\fill (0,3.5) node[above]{\scriptsize{$\mu(2)$}} ;
\fill (0,3) circle (1.5pt) node[left]{\scriptsize{$1$}};
\draw[->] (0,0) -- (3.5,0);
\fill (3.5,0) node[right]{\scriptsize{$\mu(1)$}} ;
\fill (3,0) circle (1.5pt) node[below]{\scriptsize{$1$}};
\draw[-] (3,0) -- (0,3);
\draw[-] (0,0) -- (1.5,1.5);
\draw[thick,dashed] (0,0) parabola[bend at end] (1.5,1.5);
\filldraw [fill=gray!60,draw=black]
(0,0) -- (1.5,1.5) -- (0,1.5) -- cycle;
\fill (0,1.5) circle (1.5pt) node[left]{${1\over 2}$};
\end{tikzpicture}
\vspace{-0.4cm}
\caption[smallcaption]{In gray, the projection of the region $\textsc{Ncond}(G)\cap
   \{\mu(3)=\mu(4)\}$.}\label{fi-ncond}
\end{center}
\end{figure}
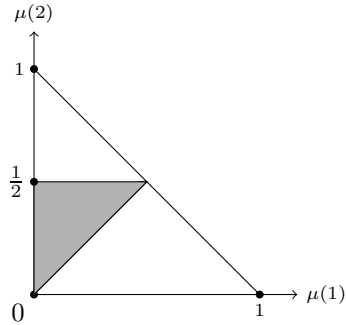
\begin{example}\rm
For the matching graph of Figure \ref{fi-example}, the set of independent sets is
$\I = \bigl\{ \{1\}, \{2\},\{3\},\{4\}, \{1,3\}, \{1,4\}\bigr\}$.  
Therefore, as the total mass of $\mu$ is 1, we have
\begin{equation}\label{eq-ncond}
 \textsc{Ncond} = \bigl\{  \mu(1) < \mu(2) < 1/2, \ \ \  
 \mu(1) + \mu(3) < 1/2, \ \ \ 
 \mu(1) + \mu(4) < 1/2 \bigr\} \:. 
\end{equation}

\noindent Making the simplifying assumption $\mu(3)=\mu(4)$, we get 
$$\textsc{Ncond} \cap \{\mu(3)=\mu(4)\} = \{ \mu(1) < \mu(2) < 1/2
\},$$ 
see Figure \ref{fi-ncond}. 
\end{example}

\paragraph{{\bf Specific conditions for bipartite graphs.}}

Assume that $G=(\maV,\maE)$ is bipartite and let $\maV=\maV_1\cup \maV_2$ be a bi-partition of the vertices in two 
independent sets.  
The following condition $\text{\textsc{Ncond}}_{1/2}(G)$ on the probability measures $\mu \in \cM(\maV)$, 
will be a useful tool in {several} proofs~: 

 \medskip

 \noindent\begin{center}\begin{fmpage}{12cm}
 $\text{\textsc{Ncond}}_{1/2}(G):  \ \  \mu(\maV_1)=\mu(\maV_2)=1/2, \quad \forall \maI \in \I\setminus\{\maV_1,\maV_2\}, \quad \mu(\maI) <
 \mu\left(\maE(\maI)\right).$
 \end{fmpage}\end{center}
\medskip
\medskip
\noindent The set of measures $\text{\textsc{Ncond}}_{1/2}(G)$ is defined likewise {\sc Ncond}$(G)$.

\paragraph{{\bf Bipartite double cover}}

Given a graph $G=(\maE,\maV)$, its {\em bipartite double cover} (see {\em e.g.} \cite{BHM80}) is 
the bipartite graph $2\circ G=(2\circ \maV,2\circ \maE)$ defined by
\begin{equation}\label{eq-double}
2\circ \maV = \maV \ \cup \ \bigl\{\tilde{u} \mid u\in \maV \bigr\}, \qquad 2\circ \maE = 
\bigl\{ (u,\tilde{v}), (v,\tilde u) \mid (u,v) \in \maE \bigr\}  \:,
\end{equation}
\noindent where the set $\tilde \maV=\bigl\{\tilde{u} \mid u\in \maV \bigr\}$ is a disjoint copy of $\maV$. 
Also denote by $2\circ \I$, the set of independent sets of $2\circ G$, and for
all $U \subset 2\circ\maV$, let $2\circ \maE(U)$ be the set of neighbors of the elements of $U$ in $2\circ G$.  
The bipartite double cover of the graph of Example \ref{ex-theexample} 
is given in Figure \ref{fi-doubling-example}.

\begin{figure}[h!]
\vspace{-0.3cm}
\begin{center}
\begin{tikzpicture}
\draw[-] (0,3) -- (0,2);
\draw[-] (0,2) -- (-1,1);
\draw[-] (0,2) -- (1,1);
\draw[-] (-1,1) -- (1,1);
\fill (0,3) circle (2pt) node[right] {\small{1}} ;
\fill (0,2) circle (2pt) node[right] {\small{2}} ;
\fill (-1,1) circle (2pt) node[below] {\small{3}} ;
\fill (1,1) circle (2pt) node[below] {\small{4}} ;
\fill (2,2) node[] {$\longrightarrow$} ;
\fill (4,3) circle (2pt) node[above] {\small{1}} ;
\fill (5,3) circle (2pt) node[above] {\small{2}} ;
\fill (6,3) circle (2pt) node[above] {\small{3}} ;
\fill (7,3) circle (2pt) node[above] {\small{4}} ;
\fill (4,1) circle (2pt) node[below] {\small{$\tilde 1$}} ;
\fill (5,1) circle (2pt) node[below] {\small{$\tilde 2$}} ;
\fill (6,1) circle (2pt) node[below] {\small{$\tilde 3$}} ;
\fill (7,1) circle (2pt) node[below] {\small{$\tilde 4$}} ;
\draw[-] (4,3) -- (5,1);
\draw[-] (5,3) -- (4,1);
\draw[-] (5,3) -- (6,1);
\draw[-] (5,3) -- (7,1);
\draw[-] (6,3) -- (5,1);
\draw[-] (6,3) -- (7,1);
\draw[-] (7,3) -- (5,1);
\draw[-] (7,3) -- (6,1);
\end{tikzpicture}
\vspace{-0.3cm}
\caption[smallcaption]{The matching graph of Example \ref{ex-theexample} and its bipartite double cover.}
\label{fi-doubling-example}
\end{center}
\end{figure}
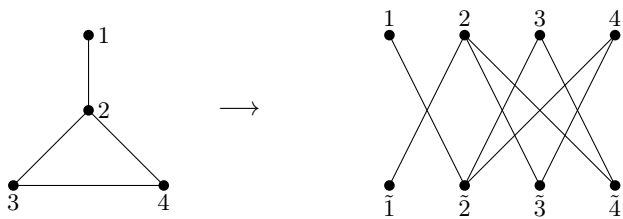
 
Consider a probability $\mu$ on $\maV$, and define the probability
$2\circ \mu \in \cM(2\circ \maV)$ by
\[
\forall u \in \maV, \qquad 2\circ \mu(u)=2\circ \mu(\tilde{u})=\mu(u)/2\:.
\]
Observe the following connection between the conditions {\sc Ncond}(.) and {\sc
  Ncond}$_{1/2}$(.),

\begin{lemma}
\label{lemma:g-2g}
For any graph $G$, we have   
\begin{equation}\label{eq-g-2g}
\bigl[ \mu \in \textsc{Ncond} (G) \bigr] \iff \bigl[ 2\circ \mu \in \textsc{Ncond}_{1/2}
(2\circ G) \bigr] \:.
\end{equation}
\end{lemma}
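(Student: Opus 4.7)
The plan is to unpack $\textsc{Ncond}_{1/2}$ on the (bipartite) graph $2\circ G$ into an inequality on pairs of subsets of $\maV$ and then compare with $\textsc{Ncond}(G)$ via Lemma \ref{lemma:equivNCOND}. The setup: by construction $2\circ G$ is bipartite between $\maV$ and $\tilde\maV$, so the balance part $2\circ\mu(\maV) = 2\circ\mu(\tilde\maV) = 1/2$ is immediate from the very definition of $2\circ\mu$. Every facet of $2\circ G$ takes the form $A \cup \tilde B$ with $A, B \subset \maV$ satisfying the independence condition $A \cap \maE(B) = \emptyset$ (equivalently $B \cap \maE(A) = \emptyset$), and a direct computation yields
$$2\circ\mu(A \cup \tilde B) = \tfrac{1}{2}\bigl(\mu(A) + \mu(B)\bigr), \qquad 2\circ\maE(A \cup \tilde B) = \maE(B) \cup \widetilde{\maE(A)}.$$
Consequently the facet inequality in $\textsc{Ncond}_{1/2}(2\circ G)$ at $A \cup \tilde B$ is equivalent to
\begin{equation*}
\mu(A) + \mu(B) \ < \ \mu(\maE(A)) + \mu(\maE(B)). \qquad (\star)
\end{equation*}

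For the implication ($\Leftarrow$), given any facet $\maF$ of $G$, one observes that $\maF \cup \tilde\maF$ is a facet of $2\circ G$ (the condition $\maF \cap \maE(\maF) = \emptyset$ is precisely the independence of $\maF$ in $G$), and specializing $(\star)$ with $A = B = \maF$ immediately gives $\mu(\maF) < \mu(\maE(\maF))$, which is $\textsc{Ncond}(G)$.

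For the converse ($\Rightarrow$), suppose $\mu \in \textsc{Ncond}(G)$, whence by Lemma \ref{lemma:equivNCOND} $\mu(U) < \mu(\maE(U))$ for every $\emptyset \ne U \subsetneq \maV$. Fix a facet $A \cup \tilde B$ of $2\circ G$. If exactly one of $A, B$ is empty, say $A = \emptyset$ with $\emptyset \ne B \subsetneq \maV$, then $(\star)$ collapses to $\mu(B) < \mu(\maE(B))$, which is immediate from Lemma \ref{lemma:equivNCOND}. If both $A$ and $B$ are non-empty, one first observes that $\textsc{Ncond}(G)$ applied to singleton facets forces $\maE(v) \ne \emptyset$ for every $v \in \maV$; combined with the independence $A \cap \maE(B) = \emptyset$ this precludes $A = \maV$ (it would force $\maE(B) = \emptyset$), and symmetrically $B = \maV$. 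Hence both $A$ and $B$ are non-empty proper subsets of $\maV$, and $(\star)$ follows by summing the two instances of Lemma \ref{lemma:equivNCOND}. The main obstacle is precisely this case analysis on the emptiness of $A$ and $B$: ruling out the boundary cases $A = \maV$ and $B = \maV$ requires first extracting the no-isolated-vertex property from $\textsc{Ncond}(G)$, and symmetrically, in the $\Leftarrow$ direction one must note that the "full-side" facets $\maV$ and $\tilde\maV$ of $2\circ G$ — for which the strict inequality cannot hold — lie outside the scope of $\textsc{Ncond}_{1/2}$ by the balance constraint.
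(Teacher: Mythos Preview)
Your proof is correct and follows the same route as the paper: decompose facets of $2\circ G$ as $A\cup\tilde B$, reduce the $\textsc{Ncond}_{1/2}$ inequality to your $(\star)$, then apply Lemma~\ref{lemma:equivNCOND} separately to $A$ and $B$ for $(\Rightarrow)$ and specialize to $A=B=\maF$ for $(\Leftarrow)$. If anything you are more careful than the paper, which invokes Lemma~\ref{lemma:equivNCOND} without explicitly disposing of the boundary cases $A\in\{\emptyset,\maV\}$ that you handle via the no-isolated-vertex argument.
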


\medskip

\proof 
($\Longrightarrow$). Let $\maI \in 2\circ \I$. We can then write $\maI=A
\cup \tilde B$, $A\subset \maV, \ \tilde B \subset \tilde \maV$. Observe that the corresponding subset 
$A \cup B$ of $\maV$ is not an independent set of $G$ in general, because neither $A$ nor $B$ are so. 
But in view of Lemma \ref{lemma:equivNCOND}, we may write that 
\begin{align*}
2\circ\mu(\maI)=\mu(A)/2+\mu(B)/2 &< \mu\left(\maE(A)\right)/2+\mu\left(\maE(B)\right)/2\\
              &=2\circ \mu\left(2\circ \maE(A)\right)+2\circ \mu\left(2\circ \maE(\tilde B)\right)
              =2\circ \mu\left(2\circ \maE(\maI)\right),
\end{align*} 
where the last equality follows from the fact that $2\circ \maE(A)$ and $2\circ\maE (\tilde B)$ form a partition of $2\circ\maE(\maI).$\\ 
\medskip
\medskip
\noindent ($\Longleftarrow$). Let  $\maI \in \I$ and let $\tilde \maI$
be its copy in $\tilde \maV$. Clearly, 
$\maI\cup \tilde \maI \in 2\circ \I$, therefore 
\begin{align*}
\mu(\maI)=2\circ \mu\left(\maI \cup \tilde \maI\right) <2\circ\mu\left(2\circ\maE\left(\maI \cup \tilde \maI\right)\right)
 &=2\circ\mu\left(2\circ\maE\left(\maI\right)\right)+ 2\circ\mu\left(2\circ\maE\left(\tilde \maI\right)\right)\\
 &=\mu\left(\maE(\maI)\right)/2+\mu\left(\maE(\maI)\right)/2
  =  \mu\left(\maE(\maI)\right)\:. \,\,\Halmos  
\end{align*}
\endproof

\paragraph{{\bf Checking the conditions  \textsc{Ncond}.}}

Given $G$ and $\mu$, how to check efficiently
whether the conditions \textsc{Ncond}$(G)$ hold?  

The
cardinality of $\maI$ is exponential in $|\maV|$, so checking directly
all the inequalities yields an algorithm of
exponential time-complexity. But it is possible to do better. 

\begin{proposition}\label{pr-ncond}
Given a graph $G=(\maV,\maE)$ and a probability $\mu$ on $\maV$, there
exists an algorithm of time complexity $O(|\maV|^3)$ to decide if
$\mu$ satisfies \textsc{Ncond}$(G)$. 
\end{proposition}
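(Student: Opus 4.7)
The plan is to recast \textsc{Ncond} as a Hall-type condition via Lemma \ref{lemma:equivNCOND}, reduce it to a maximum-flow computation on a bipartite network, and then handle the strict inequality via residual-graph analysis. First I would compute the connected components of $G$ in $O(|\maV|^2)$: any isolated vertex $v$ immediately violates \textsc{Ncond} (since $\{v\}$ is a facet with $\maE(\{v\})=\emptyset$), and since facets of $G$ are unions of independent subsets of the components, \textsc{Ncond} on $G$ decomposes into \textsc{Ncond} on each component. So I may assume $G$ is connected, and by Lemma \ref{lemma:equivNCOND} the problem reduces to deciding whether
\[
\mu(U) < \mu(\maE(U))\qquad\text{for every } U \text{ with }\emptyset\neq U\subsetneq\maV.
\]

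Next I would build a bipartite flow network $N(G,\mu)$ with source $s$, sink $t$, left copies $\{u_v:v\in\maV\}$ and right copies $\{w_v:v\in\maV\}$, arcs $s\to u_v$ and $w_v\to t$ of capacity $\mu(v)$, and arcs $u_v\to w_{v'}$ of infinite capacity for each edge $v\--v'$ of $G$. A direct inspection shows that every finite $s$-$t$ cut $(S,T)$ is parameterised by $U=\{v:u_v\in S\}\subseteq\maV$ and has minimum value $\mu(U^c)+\mu(\maE(U))$, so by max-flow/min-cut,
\[
\operatorname{maxflow}(N) \;=\; 1 - \max_{U\subseteq\maV}\bigl[\mu(U)-\mu(\maE(U))\bigr].
\]
Hence \textsc{Ncond} holds if and only if $\operatorname{maxflow}(N)=1$ and the only tight subsets $U$, i.e.\ those with $\mu(U)=\mu(\maE(U))$, are the trivial $\emptyset$ and $\maV$. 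Running, for instance, Orlin's strongly-polynomial algorithm on this $O(|\maV|)$-vertex, $O(|\maV|^2)$-arc network yields $\operatorname{maxflow}(N)$ in $O(|\maV|^3)$ time, so if the value is strictly less than $1$ the algorithm rejects.

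To verify the strictness, I would form the residual graph $N_f$ and decompose it into strongly connected components in $O(|\maV|^2)$ via Tarjan's algorithm. The standard identification of minimum $s$-$t$ cuts with those subsets of the SCC condensation that are closed under outgoing arcs, contain the SCC of $s$ and avoid that of $t$, leads to the following clean criterion checkable in $O(|\maV|^2)$: the only tight subsets are trivial if and only if the vertices $u_v$, $v\in\maV$, all lie in a single SCC of $N_f$. The main obstacle is precisely this strictness test, since max-flow by itself only detects the non-strict Hall inequality $\mu(U)\leq\mu(\maE(U))$; separating strict from non-strict \textsc{Ncond} forces one to examine the entire lattice of minimum cuts rather than a single one. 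The residual-graph SCC analysis provides a canonical and efficient handle on this lattice and keeps the total running time at $O(|\maV|^3)$, as claimed.
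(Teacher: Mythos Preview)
Your argument is correct and, at its core, is the paper's argument unpacked rather than cited. The paper's proof is two lines: it invokes Lemma~\ref{lemma:g-2g} to convert \textsc{Ncond}$(G)$ into \textsc{Ncond}$_{1/2}(2\circ G)$ on the bipartite doubling graph, and then quotes \cite[Prop.~3.5]{BGMa12} for an $O(|\maV|^3)$ flow-based test of \textsc{Ncond}$_{1/2}$. Your bipartite network $N(G,\mu)$ with left copies $u_v$ and right copies $w_v$ \emph{is} exactly the doubling graph $2\circ G$ equipped with a source and a sink, so the max-flow computation you run is the same one the paper invokes by reference. You route through Lemma~\ref{lemma:equivNCOND} instead of Lemma~\ref{lemma:g-2g}, but both serve the same purpose here: extending the facet inequalities to all proper non-empty $U$, which is what makes the min-cut parametrisation by arbitrary $U\subset\maV$ legitimate.

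What your write-up adds over the paper's is the explicit handling of the \emph{strict} inequality via the Picard--Queyranne lattice of minimum cuts and the SCC criterion on the residual graph; the paper buries this inside the citation. That portion is sound (once the max-flow equals $1$, the $u_v$ all lying in a single SCC of $N_f$ is precisely the condition that only the trivial cuts $U=\emptyset$ and $U=\maV$ are tight), and it is genuinely the delicate point of the argument, so having it spelled out is a gain in self-containedness at no cost in complexity.
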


\proof  
The result \cite[Prop. 3.5]{BGMa12} implies in particular that the
checking of $\text{\textsc{Ncond}}_{1/2}(2\circ G)$ can be done with an algorithm
of time complexity $O(|\maV|^3)$. 
Using Lemma \ref{lemma:g-2g}, we obtain the result for \textsc{Ncond}$(G)$ as a
direct corollary.\,\,\Halmos 
\endproof

\subsection{Main result}


\begin{theorem}\label{th-main}
Let $G$ be a connected graph. We have 
\begin{equation}\label{eq-sn1}
\bigl[
G \ \text{non-bipartite} \bigr] \iff \bigl[  \textsc{Ncond}(G) \neq \emptyset \bigr] \:.
\end{equation}
\end{theorem}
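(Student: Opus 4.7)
The easy direction, $[\textsc{Ncond} \neq \emptyset] \Longrightarrow [G \text{ non-bipartite}]$, I would argue by contraposition. If $G$ is connected and bipartite with parts $\maV_1 \cup \maV_2$, then both $\maV_1$ and $\maV_2$ are facets (they are independent) and, by connectedness with $|\maV|\geq 2$, one has $\maE(\maV_i) = \maV_{3-i}$. Any $\mu \in \textsc{Ncond}$ would then force $\mu(\maV_1) < \mu(\maV_2)$ and $\mu(\maV_2) < \mu(\maV_1)$ simultaneously, which is impossible.

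For the converse, my plan is to exhibit one explicit measure, namely the degree-proportional one. Set $\mu(v) := \deg(v)/(2|\maE|)$; since $G$ is connected with $|\maV|\geq 2$, every vertex has degree at least one, so $\mu \in \cM(\maV)$. For a facet $\maF$, using $\maF \cap \maE(\maF) = \emptyset$ and the fact that each edge meeting $\maF$ has exactly one endpoint in $\maF$ and one in $\maE(\maF)$, a short count on edges yields
\[
\mu(\maE(\maF)) - \mu(\maF) \;=\; \frac{1}{2|\maE|}\sum_{\{u,v\}\in \maE,\;u,v\notin\maF}\bigl( \indic{u\in\maE(\maF)} + \indic{v\in\maE(\maF)}\bigr).
\]
Each summand is non-negative, so \textsc{Ncond} will hold as soon as for every facet $\maF$ there exists at least one edge $\{u,v\}\in\maE$ with $\{u,v\}\cap\maF=\emptyset$ and $\{u,v\}\cap\maE(\maF)\neq\emptyset$.

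The heart of the proof is therefore the graph-theoretic claim: for a connected non-bipartite $G$ and any facet $\maF$, one has $\maE(\maE(\maF)) \not\subseteq \maF$. I would show this by contradiction. Set $A := \maF$, $B := \maE(\maF)$, and assume $\maE(B) \subseteq A$. Since $A$ is independent while every vertex of $B=\maE(A)$ has a neighbour in $A$, necessarily $A\cap B = \emptyset$. An edge inside $B$ would place a vertex of $B$ in $\maE(B)\subseteq A$, contradicting $A \cap B = \emptyset$; hence $B$ is independent as well. Finally, any vertex of $\maV\setminus(A\cup B)$ adjacent to $A$ would by definition belong to $B$, and one adjacent to $B$ would belong to $\maE(B)\subseteq A$; so $\maV\setminus(A\cup B)$ has no edge towards $A\cup B$. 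Connectedness of $G$ then forces $\maV = A \cup B$, and $(A,B)$ is a bipartition of $G$, contradicting the assumption that $G$ is non-bipartite.

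The only substantive step is this graph-theoretic lemma: it is the unique place where the hypothesis ``$G$ non-bipartite'' is genuinely used, and the rest is routine bookkeeping. As an alternative route, one could combine Lemma~\ref{lemma:g-2g} with a characterisation of $\textsc{Ncond}_{1/2}$ for bipartite graphs from \cite{BGMa12}, but the direct argument above seems cleaner and self-contained.
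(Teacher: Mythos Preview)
Your proof is correct and follows a genuinely different route from the paper's.

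For the easy direction (bipartite $\Rightarrow$ $\textsc{Ncond}=\emptyset$), you and the paper argue identically (this is Lemma~\ref{le-suff}).

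For the hard direction, the paper does precisely what you flag as the ``alternative route'': it passes to the doubling graph $2\circ G$, shows that $2\circ G$ is connected exactly when $G$ is non-bipartite (Lemma~\ref{le-g-2g}), then invokes a result from \cite{BGMa12} (quoted as \eqref{eq-BGMa12b}) to produce a measure in $\textsc{Ncond}_{1/2}(2\circ G)$ of the symmetric form $\bar\nu(u)=\bar\nu(\tilde u)$, and finally pulls it back to $\textsc{Ncond}(G)$ via Lemma~\ref{lemma:g-2g}. Your argument is more elementary and entirely self-contained: you exhibit an explicit witness, the degree-proportional measure, and reduce the strict inequality for each facet $\maF$ to the purely combinatorial statement $\maE(\maE(\maF))\not\subseteq\maF$, which you then derive directly from non-bipartiteness. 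This avoids any appeal to \cite{BGMa12} and, as a bonus, names a concrete $\mu\in\textsc{Ncond}$. On the other hand, the paper's detour through $2\circ G$ is not wasted: the same doubling machinery is reused in the proof of \eqref{eq-non-bip-ml} in Theorem~\ref{th-main-alea} to import the stability of \textsc{ML} from the bipartite setting, so developing it here serves a second purpose.
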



\proof 
Let $G$ be a connected graph. We first prove that 
\begin{equation}\label{eq-s1}
\bigl[
G \ \text{bipartite} \bigr] \implies \bigl[ \textsc{Ncond}(G) = \emptyset \bigr] \:.
\end{equation}
So suppose that $G$ is bipartite, and let $\maV=\maV_1\cup \maV_2$ be a
bi-partition of the vertices of $G$. Since $G$ is connected, we have
$\maE(\maV_1)=\maV_2$ and $\maE(\maV_2)=\maV_1$. The 
corresponding conditions in $\textsc{Ncond}(G)$ are
\begin{equation}\label{eq-contrad}
\mu(\maV_1) < \mu(\maV_2); \qquad \mu(\maV_2) < \mu(\maV_1) \:,
\end{equation}
hence (\ref{eq-s1}). 
The following implication is proved in 
\cite[Theorem 4.2]{BGMa12}: 
\begin{equation}\label{eq-BGMa12}
\bigl[
G \ \text{bipartite} \bigr] \implies \bigl[ \textsc{Ncond}_{1/2}(G) \neq \emptyset \bigr] \:.
\end{equation}
Consequently, comparing (\ref{eq-s1}) and \eref{eq-BGMa12} and using Lemma \ref{lemma:g-2g}, we see
that \eref{eq-contrad} is the only contradiction preventing
\textsc{Ncond}$(G)$ to hold whenever $G$ is connected and bipartite.

It remains to prove that 
\begin{equation}\label{eq-s2}
\bigl[G \ \text{non-bipartite} \bigr] \implies \bigl[\textsc{Ncond}(G) \ne \emptyset \bigr] .
\end{equation}
For this we first need to recall an auxiliary result. Consider a {\em directed}
bipartite graph $D= (\maV_1\cup \maV_2, \maE_1\cup \maE_2), \ \maE_1
\subset \maV_1\times \maV_2, \ \maE_2
\subset \maV_2\times \maV_1$. Given $\nu\in \cM(\maV_2)$,
define $\bar{\nu} \in \cM(\maV_1\cup \maV_2)$ by
\[
\forall u \in \maV_1, \ \bar{\nu}(u) = \nu(\maV_2\times \{u\})/2, \quad 
\forall u \in \maV_2, \ \bar{\nu}(u) = \nu(\{u\}\times \maV_1)/2 \:. 
\]
The next statement is a
direct consequence of \cite[Theorem 4.2]{BGMa12}: if $D$ is
strongly connected, then, since $G$ is connected and non-bipartite, the graph 
$$UD=(\maV_1\cup \maV_2, \maE_1\cup \{(v,u) \mid (u,v)\in
\maE_1\})$$
is itself connected. Thus, 
\begin{equation}\label{eq-BGMa12b}
\exists\,\nu \in \cM(\maV_2), \qquad \bar{\nu} \in
\textsc{Ncond}_{1/2}(UD)\:.
\end{equation}
Let us get back to the proof of (\ref{eq-s2}). 
The next result is standard and proved in \cite[Th. 3.4]{BHM80}:  if $G$ is connected, then
\[
[G \  \textrm{non-bipartite} ] \iff [ 2 \circ G \ \textrm{connected} ] \:.
\]
So assume that $G$ is connected and non-bipartite, then its bipartite double cover $2\circ G$ is connected. 
Consider the {\em directed} graph $D$ defined by
\[
\text{nodes:} \ 2\circ\maV= \maV \cup \tilde{\maV}, \qquad \text{arcs:} \
\{ u\rightarrow \tilde{v} \mid (u,v) \in \maE \} \cup
\{\tilde{u}\rightarrow u \mid u\in \maV\} \:.
\]
It is easy to prove that $D$ is strongly connected. 
Let us apply \eref{eq-BGMa12b} to $D$ with $\maE_2=
\{\tilde{u}\rightarrow u \mid u\in \maV\}$. We obtain the existence of 
$\bar{\nu}\in \textsc{Ncond}_{1/2} (2\circ G)$ and, by construction,
$\bar{\nu}(u) = \bar{\nu}(\tilde{u})$ for all $u\in \maV$. 
Therefore, according
to \eref{eq-g-2g}, the probability measure $\mu \in \cM(\maV)$ defined by
$$\mu(u) = \bar{\nu}(u) + \bar{\nu}(\tilde{u}),\,u \in \maV,$$  
belongs to $\textsc{Ncond}(G)$. This completes the proof. 
\Halmos
\endproof

\section{{\bf Stability of the matching model}}
\label{sec:iid}

To formalize the definition given in \S
\ref{sec:intro}, the matching model is specified by a triple
$(G,\Phi,\mu)$, where 
\begin{itemize}
\item $G=(\maV,\maE)$ is the matching
graph defined as in \S \ref{sec:intro}, and assumed to be connected. 
\item $\Phi$ is the matching policy defined as follows. 
We view the state of the buffer as a word over the alphabet $\maV$. 
More precisely, the state space is  
$$\maU=\Bigl\{u\in \maV^* \mid \forall (i,j)\in\maE, \ |u|_i\times
|u|_j=0 \Bigr\} \:$$
and we denote by $U_n \in \maU$, the state of the system just before the arrival 
of item $v_n$, for any $n\in\N$. 
The \emph{matching policy} is a mapping $\Phi : \maU \times  \maV \rightarrow
\maU$. In words, $\Phi(U,v)$ is the new buffer-content after the
arrival of an element $v$ in a buffer of content $U$. 
Observe that only the current state of the buffer is taken into account, which is a restriction, but a 
reasonable one.  
\item $\mu\in \cM(\maV)$ is the probability distribution of the
arrivals. Precisely, the sequence of
arriving items $(v_n)_{n\in \N}$ is i.i.d. of
common law $\mu$. 
\end{itemize}

Let $\mathbf 0$ be the empty word of $\maV^*$. Given a matching model $(G,\Phi,\mu)$ and a sequence of arrivals
$(v_n)_{n\in \N}$, 
the sequence of buffer-contents $(U_n)_{n\in \N}$ is a Markov
chain over the state space $\maU$ satisfying 
\[ 
U_0 = \mathbf 0, \qquad U_{n+1} = \Phi( U_n , v_n );\,n\in\N \:.
\]
This Markov chain is clearly irreducible and periodic of period 2.
 We say that the matching
model is {\em stable} if $(U_n)_{n\in \N}$ is positive recurrent. 

Consider the pair $(G,\Phi)$ formed by the
matching graph and the matching policy. 
The {\em stability region} of  $(G,\Phi)$ is the subset of $\cM(\maV)$
formed by the probability measures $\mu$ such that $(G,\Phi,\mu)$ is stable. 

\subsection{More on matching policies}

The matching policy may depend on the order of the items (i.e. on
their arrival dates). An
example is FCFS ("First Come, First Served"), where an arriving
item of class $j$ is matched with the oldest (if any) item of class $i$ in the buffer such that 
$j \v i$. 

Other matching policies are independent of the
arrival dates. In such cases, the matching decision at time $n$ depends
only on the commutative image $[U]$ of the state $U \in \maU$. In other words, the sequence
$([U_n])_{n\in \N}$ is a Markov chain on the state space  
\[
[\maU]=\Bigl\{u\in \N^{\maV} \mid \forall (i,j)\in\maE, \ u_i\times
u_j=0 \Bigr\} \:.
\]
Two such policies are considered below: 
``Match the longest'' and ``Priority''. 
For $i\in \maV$, let $e_i\in \N^{\maV}$ be defined by $(e_i)_i =1$
and $(e_i)_j=0, j\neq i$. 

\medskip


{\em Match the Longest} is the matching policy $\textsc{ML}:
[\maU]\times \maV \longrightarrow [\maU]$ defined by
\begin{equation}\label{eq-ml}
(U,i) \longmapsto \begin{cases} U + e_i & \ \mbox{if} \ \bigl[ j \in \maE(i)
  \implies U_j=0 \bigr]; \\
  U - e_j, \   j = \max \{ 
  \textsc{argmax} \ U_{|\maE(i)}
  \} & \ \mbox{otherwise},
\end{cases} 
\end{equation}
where $\textsc{argmax} \ U_{|\maE(i)}$ is the set of indices $k$ of $\maE(i)$ for which $U_k$ is positive and maximal. 
This set is non-empty and $j$ is the maximum with respect to some given total order on $\maV$. 
In words, ML gives priority to the more represented compatible class in the buffer. 

\medskip

Let us now define the priority policies. 
For each $i\in \maV$, define the {\em preferences} of $i$ as a total
order on the set $\maE(i)$. {\em Priority} is the matching policy $\Phi:
[\maU]\times \maV \longrightarrow [\maU]$ defined by
\begin{equation}\label{eq-prio}
(U,i) \longmapsto \begin{cases} U + e_i & \ \mbox{if} \ \bigl[ j \in \maE(i)
  \implies U_j=0 \bigr]; \\
  U - e_j, \   j = \max \{ \maE(i) \cap
  \mathrm{supp} \ U
  \} & \ \mbox{otherwise},
\end{cases} 
\end{equation}
where $\mathrm{supp} \ U = \{j \in \maV \mid U_j >0 \}$. In the above
second case, the set $\maE(i) \cap
  \mathrm{supp} \ U$ is non-empty and $j$ is the maximum with respect
  to the preferences of $i$ on $\maE(i)$.

\subsection{The results}
\label{subsec:posrec}

Our first result states that \textsc{Ncond} are
 necessary stability conditions. An analog
result holds for the bipartite matching model (see \cite[Lemma 3.2]{BGMa12}). 

\begin{proposition}
\label{thm:mainmono} 
Consider a connected matching graph $G$ and a matching policy
$\Phi$. We have for all $\mu \in \cM(\maV)$, 
$$(G,\Phi,\mu) \text{ stable }\Longrightarrow \mu \in \text{\textsc{Ncond}}(G).$$
\end{proposition}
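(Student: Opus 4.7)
Plan. Fix a facet $\maF \in \F$; the goal is to show $\mu(\maF) < \mu(\maE(\maF))$. The strategy is a cycle-accounting argument based on the return time $\tau := \inf\{n \geq 1 : U_n = \mathbf 0\}$ of the chain to the empty buffer, which satisfies $\esp{\tau} < \infty$ by positive recurrence. In one cycle, every arrival is matched by time $\tau$; independence of $\maF$ (and $\maF \cap \maE(\maF) = \emptyset$) forces each $\maF$-item to be matched with an $\maE(\maF)$-item. Writing $M_\tau$ for this count of $\maF$-to-$\maE(\maF)$ matches, and $R_\tau \geq 0$ for the number of $\maE(\maF)$-items matched with partners \emph{outside} $\maF$, cycle conservation yields
\begin{equation*}
N_\tau(\maF) = M_\tau, \qquad N_\tau(\maE(\maF)) = M_\tau + R_\tau,
\end{equation*}
where $N_\tau(A) := \sum_{k=0}^{\tau-1}\indic{v_k \in A}$. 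Wald's identity applied to the stopping time $\tau$ and the i.i.d.\ sequence $(v_n)$ gives $\esp{N_\tau(A)} = \mu(A)\,\esp{\tau}$, hence
\begin{equation*}
\bigl(\mu(\maE(\maF)) - \mu(\maF)\bigr)\esp{\tau} = \esp{R_\tau} \geq 0,
\end{equation*}
which yields the weak inequality $\mu(\maF) \leq \mu(\maE(\maF))$.

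To upgrade to strict, suppose for contradiction that $\mu(\maF) = \mu(\maE(\maF))$, so $R_\tau = 0$ almost surely. \emph{Case (i):} there exist $u \in \maE(\maF)$ and $v \in \maV \setminus \maF$ with $u \v v$. Since $\mu \in \cM(\maV)$, the event $\{v_0 = u,\,v_1 = v\}$ has probability $\mu(u)\mu(v) > 0$; on it, $U_1 = (u)$, the arrival $v_1 = v$ matches $u$, so $\tau = 2$ and the unique match, between $u \in \maE(\maF)$ and $v \notin \maF$, gives $R_\tau = 1$---a contradiction. \emph{Case (ii):} no such pair exists, i.e., every neighbor of $\maE(\maF)$ lies in $\maF$. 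Combined with the fact that neighbors of $\maF$ lie in $\maE(\maF)$ (by definition) and with connectedness of $G$, this forces $\maV = \maF \cup \maE(\maF)$ and $G$ to be bipartite with parts $\maF$ and $\maE(\maF)$. The equality $\mu(\maF) = \mu(\maE(\maF))$ together with $\mu(\maF) + \mu(\maE(\maF)) = 1$ then gives $\mu(\maF) = 1/2$. Set $Z_n := |U_n|_\maF - |U_n|_{\maE(\maF)}$ with $|U|_A := \sum_{a \in A}|U|_a$. A direct inspection of the dynamics shows that $Z_{n+1} - Z_n = +1$ if $v_n \in \maF$ and $-1$ if $v_n \in \maE(\maF)$, regardless of the matching policy: any match pairs the new arrival with a partner on the opposite side of the bipartition, while an unmatched arrival just joins its own side of the buffer. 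Hence $(Z_n)$ is the simple symmetric random walk on $\Z$ started at $0$. Since $U_n = \mathbf 0 \Rightarrow Z_n = 0$, $\tau$ dominates the first return of $(Z_n)$ to $0$, whose expectation is infinite by null recurrence---contradicting $\esp{\tau} < \infty$.

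The main obstacle is the strict inequality: the cycle/Wald argument gives only $\leq$. Case (i) covers the generic situation by exhibiting a short cycle producing an $R$-match of positive probability. The delicate Case (ii), in which $\maF$ and $\maE(\maF)$ partition $\maV$ with exactly half of the mass each, admits no such short cycle and instead requires the symmetric-random-walk comparison to derive $\esp{\tau} = \infty$ and contradict positive recurrence.
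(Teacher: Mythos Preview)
Your argument is correct, but it follows a substantially more elaborate route than the paper's. The paper bypasses Wald and the cycle decomposition entirely by observing the single almost-sure inequality
\[
|U_n| \ \geq \ |U_n|_{\maF} \ \geq \ H_n \ := \ \sum_{k<n}\indic{v_k\in\maF}-\sum_{k<n}\indic{v_k\in\maE(\maF)},
\]
valid because each matched $\maF$-item consumes one $\maE(\maF)$-arrival. If $\mu(\maF)>\mu(\maE(\maF))$, the strong law gives $|U_n|/n\to\mu(\maF)-\mu(\maE(\maF))>0$ a.s., hence transience. If $\mu(\maF)=\mu(\maE(\maF))$, then $(H_n)$ is a (lazy) symmetric random walk; since $U_n=\mathbf 0$ forces $H_n\le 0$, the first return of $(U_n)$ to $\mathbf 0$ dominates the first time $(H_n)$ enters $(-\infty,0]$, which has infinite mean. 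Thus both cases are handled by a single pathwise comparison, with no need for your Case~(i)/Case~(ii) dichotomy.

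Your approach does have merit: the Wald cycle-balance identity $\bigl(\mu(\maE(\maF))-\mu(\maF)\bigr)\esp{\tau}=\esp{R_\tau}$ is a clean conservation law that generalizes well to other matching/queueing settings, and your Case~(i) makes transparent \emph{why} strictness holds generically (one explicit short cycle). The cost is length: Case~(ii) ends up reconstructing a random-walk comparison (via $Z_n=|U_n|_{\maF}-|U_n|_{\maE(\maF)}$) that the paper's $H_n$ already delivers from the outset, and which moreover works for \emph{every} facet without first proving bipartiteness. In short, both proofs are valid; the paper's is shorter and more uniform, while yours isolates the combinatorial reason for strict inequality more explicitly.
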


\proof 
First assume that we have $\mu(\maI)>\mu\left(\maE(\maI)\right)$ for some independent set 
$\maI \subset \I$.  
For any $n\in\N$, let $\maI_n$ be the number of elements of $\maI$ in the system at 
time $n$, $F_n$ be the number of arrivals of type $\maI$ up to
time $n$, and $E_n$, the number of arrivals of type $\maE(\maI)$ up
to time $n$. Denote finally $H_n=F_n-E_n.$  
Observe that 
\begin{equation}
\label{eq:compareZ}
\left | U_n \right| \ge \maI_n\ge H_n \:. 
\end{equation}
By the strong law of large numbers,  we get 
\begin{equation}
\label{eq:LLN}
{\left | U_n \right| \over n} \ge {H_n \over n} \ \tend \
\mu\left(\maI\right)-\mu\left(\maE(\maI)\right) >0 \qquad \mbox{ a.s..}
\end{equation}
This implies that $(U_n)_{n}$ is transient. 

Suppose now that for some $\maI \in \I$, $\mu(\maI)=\mu\left(\maE(\maI)\right)$. 
In that case, the Markov chain $(H_n)_n$ is null recurrent. Again, in
view of (\ref{eq:compareZ}), the Markov chain
$(U_n)_n$ cannot be positive recurrent.\,\,\Halmos 
%
\endproof


\medskip

 The graph $G$ is said to be {\em separable of order $p$}, $p\ge 2$, if there exists a
 partition of $\maV$ into independent sets $\maI_1,\dots, \maI_p$, such that
 \[
 \forall i\neq j,\, \forall u \in \maI_i,\, \forall v \in \maI_j,\,\,u \v v \:. 
 \]

 \noindent In other words, $G$ is separable of order $p$ if its complementary
 graph can be partitioned into $p$ cliques. Notice that separable graphs of order 2 are bipartite 
(see Figure \ref{Fig:separable} below), whereas separable graphs of order 3 or more are non-bipartite. 

 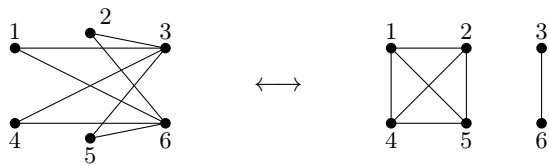
\begin{figure}[h!]
 \begin{center}
 \begin{tikzpicture}
 %
 \draw[-] (1,-1) -- (3,-2);
 \draw[-] (1,-1) -- (3,-1);
 \draw[-] (2,-0.8) -- (3,-2);
 \draw[-] (2,-0.8) -- (3,-1);
 \draw[-] (3,-1) -- (1,-2);
 \draw[-] (3,-1) -- (2,-2.2);
 \draw[-] (1,-2) -- (3,-2);
 \draw[-] (2,-2.2) -- (3,-2);
 \fill (1,-1) circle (2pt) node[above] {\small{1}} ;
 \fill (2,-0.8) circle (2pt) node[above right] {\small{2}} ;
 \fill (3,-1) circle (2pt) node[above] {\small{3}} ;
 \fill (1,-2) circle (2pt) node[below] {\small{4}} ;
 \fill (2,-2.2) circle (2pt) node[below] {\small{5}} ;
 \fill (3,-2) circle (2pt) node[below] {\small{6}} ;
 \fill (4.5,-1.5) node[] {$\longleftrightarrow$} ;
 \draw[-] (6,-1) -- (6,-2);
 \draw[-] (6,-1) -- (7,-2);
 \draw[-] (6,-1) -- (7,-1);
 \draw[-] (7,-1) -- (6,-2);
 \draw[-] (7,-1) -- (7,-2);
 \draw[-] (6,-2) -- (7,-2);
 \draw[-] (8,-1) -- (8,-2);
 \fill (6,-1) circle (2pt) node[above] {\small{1}} ;
 \fill (7,-1) circle (2pt) node[above] {\small{2}} ;
 \fill (8,-1) circle (2pt) node[above] {\small{3}} ;
 \fill (6,-2) circle (2pt) node[below] {\small{4}} ;
 \fill (7,-2) circle (2pt) node[below] {\small{5}} ;
 \fill (8,-2) circle (2pt) node[below] {\small{6}} ;
 \end{tikzpicture}
\vspace{-0.3cm}
 \caption{Separable graph of order $2$ (left) and the
   complementary graph (right).}
 \label{Fig:separable}
 \end{center}
 \end{figure}

\begin{theorem}\label{th-main-alea}
Consider a connected matching graph $G$. Let $\textsc{Pol}$ be the set
of matching policies. Let $\textsc{ML}$ be the
``Match the Longest'' policy, see \eref{eq-ml}. We have 
\begin{eqnarray}
G \text{ non-bipartite } & \iff & \exists \Phi \in \textsc{Pol}, \exists \mu \in \textsc{Ncond}(G), \
[(G,\Phi,\mu) \text{ stable }] \label{eq-non-bip}\\
G \text{ non-bipartite } & \implies & \forall \mu \in \textsc{Ncond}(G),
\ [(G,\textsc{ML},\mu) \text{ stable }] \label{eq-non-bip-ml}\\
\ \  G \text{ separable, }p\geq 3 & \implies & \forall \Phi \in \textsc{Pol}, \forall \mu \in \textsc{Ncond}(G), \
[(G,\Phi,\mu) \text{ stable }] \label{eq-sep}
\end{eqnarray}
\end{theorem}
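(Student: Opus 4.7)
The plan is to establish \eqref{eq-non-bip-ml} as the main technical step; both \eqref{eq-non-bip} and the reduction of \eqref{eq-sep} to a much easier drift calculation become corollaries. For $(\Leftarrow)$ of \eqref{eq-non-bip}, existence of any $\mu\in\textsc{Ncond}$ makes $\textsc{Ncond}\neq\emptyset$, whence $G$ is non-bipartite by Theorem~\ref{th-main}; conversely, if $G$ is non-bipartite, Theorem~\ref{th-main} supplies some $\mu\in\textsc{Ncond}$ and \eqref{eq-non-bip-ml} produces the stable witness $(G,\textsc{ML},\mu)$. So the real work is in \eqref{eq-non-bip-ml} and \eqref{eq-sep}.

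For \eqref{eq-non-bip-ml}, I would apply the Foster--Lyapunov criterion to the quadratic function $L(U)=\tfrac{1}{2}\sum_{v\in\maV}U_v^2$. Denote $\maF:=\mathrm{supp}(U)$, which is independent, hence in $\F\cup\{\emptyset\}$. A case analysis on the arriving class $i$ under ML (``no match'' if $\maE(i)\cap\maF=\emptyset$, i.e. $i\in\maF$ or $i\notin\maF\cup\maE(\maF)$; ``match'' with $j\in\arg\max_{k\in\maE(i)\cap\maF}U_k$ otherwise) yields the drift identity
\[
\esp{L(U_{n+1})-L(U_n)\mid U_n=U}=\frac{1}{2}+\sum_{i\in\maF}\mu(i)\,U_i-\sum_{i\in\maE(\maF)}\mu(i)\max_{k\in\maE(i)\cap\maF}U_k,
\]
the maximum being independent of the tie-breaking rule. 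The crux is a layer-cake decomposition: setting $S_t:=\{v\in\maF:U_v\geq t\}$ for $t\geq 1$, one has $U_v=\sum_{t\geq 1}\ind[v\in S_t]$ and $\max_{k\in\maE(i)\cap\maF}U_k=\sum_{t\geq 1}\ind[i\in\maE(S_t)]$, and swapping summations rewrites the drift as
\[
\frac{1}{2}-\sum_{t\geq 1}\bigl[\mu(\maE(S_t))-\mu(S_t)\bigr].
\]
Each non-empty $S_t$ is a subset of the facet $\maF$ and therefore itself a facet of $G$; by $\mu\in\textsc{Ncond}$ and the finiteness of $\F$, $\mu(\maE(S_t))-\mu(S_t)\geq\delta:=\min_{\maF'\in\F}[\mu(\maE(\maF'))-\mu(\maF')]>0$. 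The number of non-empty layers equals $\max_v U_v\geq|U|/|\maV|$, so the drift is bounded above by $\tfrac{1}{2}-\delta|U|/|\maV|$, which becomes arbitrarily negative as $|U|\to\infty$. Foster--Lyapunov then yields positive recurrence.

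For \eqref{eq-sep}, the linear Lyapunov $L(U)=|U|$ suffices, and crucially works for \emph{any} policy. The separability assumption, with partition $\maV=\maF_1\sqcup\cdots\sqcup\maF_p$ and every cross-part pair adjacent, forces the independent sets of $G$ to be subsets of a single $\maF_i$; hence $\mathrm{supp}(U_n)\subset\maF_{i(n)}$ and $\maE(\maF_i)=\maV\setminus\maF_i$. Whenever $\mathrm{supp}(U)\subset\maF_i$ is non-empty, an arrival in $\maF_i$ is stored (increasing $|U|$ by $1$) and an arrival outside $\maF_i$ is matched by any policy (decreasing $|U|$ by $1$), so the drift of $|U|$ is $2\mu(\maF_i)-1$, strictly negative by NCOND applied to $\maF_i$ (which gives $\mu(\maF_i)<1-\mu(\maF_i)$). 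A uniform bound over the finitely many $\maF_i$ and Foster--Lyapunov conclude.

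The main obstacle is \eqref{eq-non-bip-ml}: under ML, the evolution of distinct coordinates is coupled through the max operator, so it is not \emph{a priori} clear how the finitely many NCOND inequalities (one per facet) should control the drift. The key insight is the layer-cake reorganization, which converts the problematic max term into a sum of indicator contributions, each controlled by an NCOND inequality applied to a sub-facet of $\mathrm{supp}(U)$. Once the identity and the uniform constant $\delta$ are extracted, the drift bound and the application of Foster--Lyapunov are routine.
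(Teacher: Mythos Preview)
Your argument is correct. For \eqref{eq-non-bip} and \eqref{eq-sep} you do essentially what the paper does: the reduction of \eqref{eq-non-bip} to \eqref{eq-non-bip-ml} via Theorem~\ref{th-main} is identical, and for \eqref{eq-sep} the paper also uses the linear Lyapunov function $L(u)=|u|$ and arrives at the same drift $\mu(\maE(\maF^u)^c)-\mu(\maE(\maF^u))=2\mu(\maF_i)-1$.

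For \eqref{eq-non-bip-ml}, however, your route is genuinely different. The paper does \emph{not} build a Lyapunov function on $G$ directly; instead it transfers the problem to the bipartite matching model on the doubling graph $2\circ G$, observes that $2\circ\mu\in\textsc{Ncond}_{1/2}(2\circ G)$ by Lemma~\ref{lemma:g-2g}, and then invokes Theorem~7.1 of \cite{BGMa12} (stability of the bipartite ML model under $\textsc{Ncond}_{1/2}$) as a black box. Your quadratic Lyapunov with the layer-cake identity
\[
\sum_{i\in\maF}\mu(i)\,U_i-\sum_{i\in\maE(\maF)}\mu(i)\max_{k\in\maE(i)\cap\maF}U_k
=-\sum_{t\ge 1}\bigl[\mu(\maE(S_t))-\mu(S_t)\bigr]
\]
gives a fully self-contained proof, bypassing both the doubling construction and the external reference; it also makes transparent \emph{why} the facet inequalities are exactly what is needed, since each level set $S_t$ is a facet. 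The paper's approach, on the other hand, is shorter on the page and highlights the structural link between the matching model and the EBM model that the introduction advertises. Both are valid; yours is the more elementary and more informative one here.
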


\medskip

By merging Theorem \ref{th-main-alea} with the results from \cite{BGMa12}, we
get the following. A connected matching graph $G$ is either bipartite or
not. In the first case, we may construct a stable {\em bipartite}
matching model on $G$ (as in \cite{BGMa12}) but not a stable matching model. In
the second case, we may construct a stable matching model on $G$ (and the
bipartite matching model is not even defined). 

\proof[Proof of Theorem \ref{th-main-alea}] 
Fix the connected graph $G$. According to \eref{eq-sn1} in Theorem \ref{th-main}, the set $\textsc{Ncond}(G)$ is non-empty if and only if 
$G$ is non-bipartite. Therefore, we have 
\begin{align}
\exists \Phi \in \textsc{Pol}, \exists \mu \in \textsc{Ncond}(G), \
[(G,\Phi,\mu) \text{ stable }] &\Longrightarrow \textsc{Ncond}(G)\neq \emptyset\nonumber\\ 
                               &\Longrightarrow G \text{ non-bipartite}\:.\label{eq-interm}
\end{align}
Let us now prove that \eref{eq-non-bip-ml} holds. Together with
\eref{eq-interm}, it will also prove \eref{eq-non-bip}. 
 Let $\mu \in $ {\sc Ncond}$(G)$. Consider the bipartite double cover  
$2\circ G = (2 \circ \maV, 2 \circ \maE)$ of $G$. According
to Lemma \ref{lemma:g-2g}, we have $2\circ \mu \in \textsc{Ncond}_{1/2}
(2\circ G)$. 


Consider the bipartite matching model on the graph $2\circ G$ with
matching policy $ML$ and i.i.d. arriving sequence $(v_n, \widetilde v_n)_n$
of common law $2\circ \mu$. In the latter, let $\left([W_n], [\widetilde W_n]\right)_n$ be the corresponding buffer-content Markov Chain, as defined in (3) of \cite{BGMa12}, {\em i.e.} for any $i \in \maV$ and $\tilde i \in \widetilde{\maV}$, let $W_n(i)$ and 
$\widetilde W_n(\tilde i)$ count the number of buffered items of respective classes $i$ and $\tilde i$. 
Then, if $U_0=W_0$, one can easily check by induction that, for all $n$, we have $U_n=W_n$ almost surely. 
Since $2\circ \mu \in \textsc{Ncond}_{1/2}
(2\circ G)$, according to Theorem 7.1 in \cite{BGMa12}, the Markov
chain $\left([W_n], [\widetilde W_n]\right)_n$ is positive recurrent. We deduce that 
$([U_n])_n$ is also positive recurrent, which completes the proof of \eref{eq-non-bip-ml}. 

The only point that remains to be proved is \eref{eq-sep}.
 Assume that $G$ is separable of order $p \ge 3$ and let $\maI_1,\dots,
\maI_p$ be the independent sets partitioning $\maV$. For any $\maI \in \I$, there exists $i$ such that
$\maI\subset \maI_i$ and  $\maE(\maI)^c = \maI_i$. Therefore, 
\begin{equation}\label{eq-sn3}
\forall \maI \in \I, \ \maE(\maI)^c \in \I  \:.
\end{equation}
For $\maI\in\I$, define $\maI^\prime= \maE(\maI)^c$. 
Observe that $\maE(\maI) = \maE(\maI^\prime)$. In particular, we have 
\begin{equation}\label{eq-equiv}
\bigl[ \mu ( \maE(\maI)^c) <
\mu(\maE(\maI)) \bigr] 
\iff  \bigl[  \mu(\maI^\prime)   <
\mu(\maE(\maI^\prime)) \bigr] \:.
\end{equation}
Since $G$ is non-bipartite, $\textsc{Ncond}(G)$ is non-empty. 
Assume that $\mu \in \textsc{Ncond}(G)$ so that the right-hand side of \eref{eq-equiv} holds. Therefore, the left-hand side of
\eref{eq-equiv} holds as well for all $\maI\in
\I$. 
Consider the Lyapunov function $L$, defined for all $u \in \maU$ by
$L(u)=\left|u\right|.$ 
Fix $U_n=u\in \maU\setminus \{\mathbf 0\}$, and consider the
independent set $\maI^u=\bigl\{i\in\maV \ ; \  |u|_i >0\bigl\}.$
For any matching policy, the size of the buffer decreases (respectively, increases) at time $n+1$ if and only if 
$v_{n+1} \in \maE\left(\maI^u\right)$ (resp., $v_{n+1} \not\in \maE\left(\maI^u\right)$). Hence  
$$\esp{L\left(U_{n+1}\right)-L(u) \mid U_n=u}=\mu\left(\maE\left(\maI^u\right)^c\right)-\mu\left(\maE\left(\maI^u\right)\right)<0.$$
We conclude that the model is stable by applying the Lyapunov-Foster Theorem (see for instance
\cite[\S 5.1]{brem99}).\,\,\Halmos
\endproof

\section{{\bf Detailed study of the model of Example \ref{ex-theexample}}}

In this section, consider again the matching graph $G$ of Figure \ref{fi-example}. 
For simplicity, fix $\mu \in \mathcal M^+(\mathcal V)$ such that $\mu(3)=\mu(4)$. 
Let us fix a matching policy and denote by  $\textsc{Stab}$ the
stability region of the model. According to \eref{eq-ncond}, we have
$\textsc{Ncond}(G)=\{ \mu(1) < \mu(2) <
1/2 \}$. By Proposition \ref{thm:mainmono}, we have $\textsc{Stab} \subset \{ \mu(1) < \mu(2) <
1/2 \}$. Let us refine this statement with a non-trivial sufficient stability condition. 

\begin{lemma}\label{le-inclusions}
The stability 
region satisfies 
\begin{equation*}\label{eq-inclusions}
\textsc{Ncond}(G)\cap \bigl\{ \mu(1)(1-\mu(1)) <
\mu(2)^2 \bigr\}  \subset \textsc{Stab} \subset {\textsc{Ncond}(G)} \:.
\end{equation*}
\end{lemma}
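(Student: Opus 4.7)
The upper inclusion $\textsc{Stab}\subset\bigl\{\mu(1)<\mu(2)<1/2\bigr\}$ is immediate: Proposition~\ref{thm:mainmono} gives $\textsc{Stab}\subset\textsc{Ncond}$, and the assumption $\mu(3)=\mu(4)$ combined with the computation in \eref{eq-ncond} reduces $\textsc{Ncond}$ to exactly $\bigl\{\mu(1)<\mu(2)<1/2\bigr\}$.

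For the lower inclusion, the strategy is to exhibit a Lyapunov function whose drift does not depend on the choice of matching policy. Write $\alpha=\mu(1)$, $\beta=\mu(2)$, $\gamma=\mu(3)=\mu(4)$, so that $\alpha+\beta+2\gamma=1$, and denote by $(a,b,c,d)$ the commutative image of the buffer content. The natural quadratic candidates (such as $a^2+b^2+(c+d)^2$) all fail here: either the drift is positive in the pure $\{1\}$-states, or the drift in the mixed $\{1,3\}$- and $\{1,4\}$-states depends sensitively on how the policy arbitrates an incoming $2$ between matching a $1$ and matching a $3$ or $4$. I would therefore try the \emph{linear} function
\[
L(a,b,c,d)\;=\;a+k(c+d)+m\,b,
\]
with parameters $k,m>0$ to be chosen, and compute the one-step expected drift of $L$ in each facet type. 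A direct enumeration gives: in $(a,0,0,0)$ with $a\geq 1$ the drift equals $\alpha-\beta+2\gamma k$; in $(0,b,0,0)$ with $b\geq 1$ it equals $m(2\beta-1)$; in $(0,0,c,0)$ or $(0,0,0,d)$ with the positive component $\geq 1$ it equals $\alpha-\beta k$; and in a mixed state $(a,0,c,0)$ or $(a,0,0,d)$ with both components $\geq 1$, if the policy matches an incoming $2$ with a class-$1$ item with (possibly state-dependent) probability $\pi\in[0,1]$, the drift equals $\alpha-\beta\bigl[\pi+k(1-\pi)\bigr]$, which is bounded above by $\alpha-\beta k$ whenever $k\leq 1$.

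Requiring strict negativity of the drift in every nonempty state yields three conditions: $\beta<1/2$ (so that any $m>0$ works), $k>\alpha/\beta$, and $k<(\beta-\alpha)/(2\gamma)$. The interval $\bigl(\alpha/\beta,\,(\beta-\alpha)/(2\gamma)\bigr)$ is nonempty if and only if $\alpha(\beta+2\gamma)<\beta^{2}$, equivalently $\alpha(1-\alpha)<\beta^{2}$, which is precisely the extra hypothesis of the lemma; and then $(\beta-\alpha)/(2\gamma)<1$ holds automatically since $2\beta<1$, so the requirement $k\leq 1$ used above is met. Fixing such a $k$ and any $m>0$, the drift of $L$ is bounded above by a strictly negative constant outside the single state $\mathbf 0$, and its sublevel sets are finite subsets of $\maU$; the Foster--Lyapunov criterion (see for instance \cite[\S 5.1]{brem99}) then yields positive recurrence of $(U_n)_{n}$. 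The main obstacle was precisely to design a Lyapunov function insensitive to the policy-ambiguity in the mixed $\{1,3\}$/$\{1,4\}$-states: the linearity of $L$ turns that drift into an affine function of the arbitration probability $\pi$, so a worst-case analysis in $\pi$ reduces to the two sign constraints $k>\alpha/\beta$ and $k<(\beta-\alpha)/(2\gamma)$, whose simultaneous solvability is exactly the algebraic condition $\alpha(1-\alpha)<\beta^{2}$.
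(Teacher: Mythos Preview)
Your proof is correct and follows essentially the same approach as the paper's: a linear Lyapunov function in $|u|_1,|u|_2,|u|_{34}$, a worst-case bound on the drift in the mixed $\{1,3\}/\{1,4\}$-states to absorb the policy ambiguity, and Foster's criterion. The only cosmetic difference is the parametrization: the paper fixes the $|u|_{34}$-weight to $\mu(1)/\mu(2)$ and perturbs the $|u|_1$-weight by a small $\eta\to 0$, whereas you keep the $|u|_{34}$-weight $k$ free in the open interval $\bigl(\alpha/\beta,(\beta-\alpha)/(2\gamma)\bigr)$, which avoids the limit argument.
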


\proof 
We only have to prove the left inclusion. 
Assume $\textsc{Ncond}(G)$ is satisfied. For $u$ in the state space
$\maU$, set $|u|_{34}=|u|_3+ |u|_4$. 
Fix $\eta$ such that
\begin{equation}
\label{eq:condeta}
\mu(1)\mu(2)^{-1}< 1 -\eta <1
\end{equation} and consider the Lyapunov function   
\[
L_\eta: \ \ \ 
\maU \longrightarrow \R_+, \qquad 
u \longmapsto (1-\eta) |u|_1 + |u|_2 + 
\mu(1)\mu(2)^{-1}\ |u|_{34} \:.
\]
Let us compute, for all $n\in\N$, 
$$\Delta_\eta = \esp{
  L_\eta\left(U_{n+1}\right)-L_\eta\left(U_n\right) \mid U_n = u } $$
in the different regions of the state space. If $|u|_2>0$, we have 
$$\Delta_\eta = \mu(2) - (1-\mu(2))=2\mu(2)-1,$$ 
so $\Delta_\eta < 0$ according to $\textsc{Ncond}(G)$. 
If $|u|_{34} >0$, we have 
\[
\Delta_\eta = (1-\eta)\mu(1) + \mu(3) \mu(1)\mu(2)^{-1} - \mu(3)\mu(1)\mu(2)^{-1}
- \mu(2) \alpha = (1-\eta) \mu(1)  - \mu(2) \alpha \:,
\]
where $\alpha=1-\eta$ if the arriving item of type 2 is matched with a buffered
item of type 1, and $\alpha=\mu(1)\mu(2)^{-1}$ otherwise. From (\ref{eq:condeta}), 
we get
\[
\Delta_\eta  \leq (1-\eta) \mu(1) - \mu(2)
\mu(1)\mu(2)^{-1} =-\eta \mu(1) <0 \:. 
\]
If $|u|_1>0$ and $|u|_{34} =0$, we have 
\[
\Delta_\eta = (1-\eta) \mu(1) +
2\mu(3)\mu(1)\mu(2)^{-1} - (1-\eta) \mu(2)\:.
\]
Replacing $2\mu(3)$ by $[1-\mu(1)-\mu(2)]$ and symplifying, we get
\[
\bigl[ \Delta_\eta <0 \bigr] \iff \bigl[ \eta\mu(2)(\mu(2)-\mu(1)) + \mu(1)(1-\mu(1)) < \mu(2)^2
\bigr]\:.
\]
Applying again the Lyapunov-Foster Theorem to the subset $A=\left\{\mathbf 0\right\}$, the model is stable on any 
region $\textsc{Ncond}(G) \cap \{ \eta\mu(2)(\mu(2)-\mu(1)) +
\mu(1)(1-\mu(1)) < \mu(2)^2 \}$, for $\eta$ satisfying (\ref{eq:condeta}). 
By letting $\eta$ go to 0, we obtain the left inclusion of Lemma \ref{le-inclusions}.\,\,\Halmos
\endproof

\medskip

According to Theorem \ref{th-main-alea}, the ML policy has a maximal stability region
and reaches the right bound in Lemma
\ref{le-inclusions}. It is then natural to wonder, whether there
exists a matching policy with the smallest possible stability region,
that is, reaching the left bound in Lemma \ref{le-inclusions}. 
To investigate this question, let us introduce two matching
policies of the {\em priority} type, see \eref{eq-prio}: 
\begin{itemize}
\item $A$: 2  gives priority to ``3 or 4'' over 1. $B$: 2 gives priority to 1 over ``3 or 4''. 
\end{itemize}
\noindent Denote by $\textsc{Stab}(A)$ and $\textsc{Stab}(B)$, the stability
regions of policy $A$ and $B$, respectively. 

\medskip

We use a simplified state space description $\breve{\maU}$, by considering the commutative
image of the states and by 
merging items 3 and 4~: 
\begin{equation*}
\breve{\maU} = \bigl\{ (0,\ell,0), \ell\in \N \bigr\} \cup \bigl\{ (k,0,m),\, k,m \in \N
\bigr\}=\breve{\maU}_2 \cup \breve{\maU}_{134}.
\end{equation*}
The buffer-content is described by the $\breve{\maU}$- valued Markov chain $(\breve U_n)_n$, 
where, 
$$\breve U_n(1)= |U_n|_1,\,\breve U_n(2)=|U_n|_2,\,\breve
U_n(3)=|U_n|_{34}.$$

Observe that $(\breve U_n)_n$ has to go through state $(0,0,0)$ to go from $\breve{\maU}_2$
to $\breve{\maU}_{134}$ (or the other way around). Due to this
property, $(\breve U_n)_n$ is positive recurrent iff the induced Markov chains on  $\breve{\maU}_2$
and $\breve{\maU}_{134}$ are both positive recurrent. 

\medskip

Let us consider first the induced Markov chain on  $\breve{\maU}_2$. It is the
same for the two priority policies and its transition matrix $P$
satisfies
\[
\forall i \in \N\setminus \{0\}, \qquad P_{i, i-1} = 1 -\mu(2), \quad
P_{i,i+1} = \mu(2) \:.
\]
So the stability condition of the induced chain is: $\bigl(\mu(2)< 1- \mu(2)\bigr) \iff \bigl( 
\mu(2)<1/2 \bigr).$

Now consider the induced Markov chains on $\breve{\maU}_{134}$, which depend on the
priority policy. The two induced chains are random walks on
$\Z^2_+$, meaning that the transition probabilities are homogeneous in
the interior of the state space, and along each of the axis. 
Denote by $Q_A$ and $Q_B$ the transition matrices of
the induced chains under the
policies $A$ and $B$ respectively. The graphs of $Q_A$ and $Q_B$ are
represented in Figure \ref{fi-trgraph}, where $(i,j)$ corresponds to
the state $(i,0,j)$. 

\begin{figure}[htb]
 \begin{center}
 \begin{tikzpicture}
\draw[->] (0,0) -- (4,0) node[right]{\scriptsize{items 1}};
\draw[->] (0,0) -- (0,4) node[above]{\scriptsize{items 3 or 4}};
\fill (2,0) circle (2pt);
\draw[->, thick] (2,0) -- (2.5,0) node [above right]{\scriptsize{$\mu(1)$}};
\draw[->, thick] (2,0) -- (1.5,0) node [above left]{\scriptsize{$\mu(2)$}};
\draw[->, thick] (2,0) -- (2,0.5) node [above]{\scriptsize{$2\mu(3)$}};
%
\fill (3,3) circle (2pt);
\draw[->, thick] (3,3) -- (3.5,3) node [right]{\scriptsize{$\mu(1)$}};
\draw[->, thick] (3,3) -- (3,3.5) node [above]{\scriptsize{$\mu(3)$}};
\draw[->, thick] (3,3) -- (3,2.5) node [below]{\scriptsize{$\mu(2)+\mu(3)$}};
%
\fill (0,2) circle (2pt);
\draw[->, thick] (0,2) -- (0.5,2) node [right]{\scriptsize{$\mu(1)$}};
\draw[->, thick] (0,2) -- (0,2.5) node [right]{\scriptsize{$\mu(3)$}};
\draw[->, thick] (0,2) -- (0,1.5) node [right]{\scriptsize{$\mu(2)+\mu(3)$}};
%
%
%
\draw[->] (7,0) -- (11,0) node[right]{\scriptsize{items 1}};
\draw[->] (7,0) -- (7,4) node[above]{\scriptsize{items 3 or 4}};
\fill (9,0) circle (2pt);
\draw[->, thick] (9,0) -- (9.5,0) node [above right]{\scriptsize{$\mu(1)$}};
\draw[->, thick] (9,0) -- (8.5,0) node [above left]{\scriptsize{$\mu(2)$}};
\draw[->, thick] (9,0) -- (9,0.5) node [above]{\scriptsize{$2\mu(3)$}};
%
\fill (10,3) circle (2pt);
\draw[->, thick] (10,3) -- (10.5,3) node [right]{\scriptsize{$\mu(1)$}};
\draw[->, thick] (10,3) -- (10,3.5) node [above]{\scriptsize{$\mu(3)$}};
\draw[->, thick] (10,3) -- (9.5,3) node [left]{\scriptsize{$\mu(2)$}};
\draw[->, thick] (10,3) -- (10,2.5) node [below]{\scriptsize{$\mu(3)$}};
%
\fill (7,2) circle (2pt);
\draw[->, thick] (7,2) -- (7.5,2) node [right]{\scriptsize{$\mu(1)$}};
\draw[->, thick] (7,2) -- (7,2.5) node [right]{\scriptsize{$\mu(3)$}};
\draw[->, thick] (7,2) -- (7,1.5) node [right]{\scriptsize{$\mu(2)+\mu(3)$}};
\end{tikzpicture}
 \caption{The graph of $Q_A$ (left), and that of $Q_B$ (right).}\label{fi-trgraph}
 \end{center}
 \end{figure}
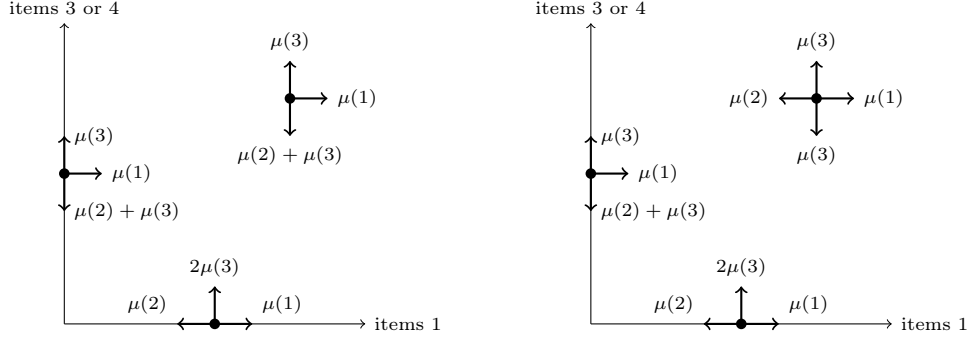
Let us justify, for instance, the coefficients 
$(Q_A)_{ij,i(j-1)}= \mu(2)+\mu(3),\,i\ge 0,\,j>0.$
In state $(i,j)$, there are either
$j$ items of type 3 or $j$ items of type 4. In the first case 
(resp. second case), one of the $j$ items is removed if an item of type 4 (resp., of type 3) arrives. 
In both cases, such an event occurs with the same probability $\mu(3)=\mu(4)$. Further, due to the
priority policy, one of the $j$ items is also 
removed whenever an item of type 2 arrives (probability $\mu(2)$).

The detailed study of random walks in $\Z^2_+$ is carried out in the
monograph \cite{FMM}. The salient result \cite[Theorem 3.3.1]{FMM},
is the necessary and sufficient condition for positive recurrence in terms of the one-step drifts of the random walk on the interior
of the quadrant, and on each of the axes. It applies directly to our context.

Let us first consider policy $A$. 
The drifts of the Markov chain are
\begin{eqnarray*}
\mathrm{Interior:} & D_x = \mu(1), & D_y = -\mu(2) \\
\mathrm{First \ axis:} & D_x' = \mu(1)-\mu(2), & D_y'= 2\mu(3) \\
\mathrm{Second \ axis:} & D_x'' = \mu(1), &  D_y'' = -\mu(2)\:.
\end{eqnarray*}

Since $D_x>0$ and $D_y<0$, the
Markov chain is stable iff $[D_xD_y' - D_yD_x' <0]$, see
\cite[Theorem 3.3.1]{FMM}. We have
\begin{equation*}
\bigl[ D_xD_y' - D_yD_x' <0 \bigr]\!\!\iff \!\!\bigl[ 2\mu(1)\mu(3) +
\mu(2)(\mu(1)-\mu(2)) <0 \bigr]\!\!\iff  \!\!\bigl[\mu(1)(1-\mu(1)) <
\mu(2)^2 \bigr] \:.
\end{equation*}



We now turn to the priority policy $B$. The drifts of the Markov chain read 
\begin{eqnarray*}
\mathrm{Interior:} & D_x = \mu(1)-\mu(2), & D_y = 0 \\
\mathrm{First \ axis:} & D_x' = \mu(1)-\mu(2), & D_y'= 2\mu(3) \\
\mathrm{Second \ axis:} & D_x'' = \mu(1), &  D_y'' = -\mu(2)\:.
\end{eqnarray*}
\noindent Since $D_x<0$ and $D_y=0$, the
Markov chain is stable iff $[D_yD_x'' - D_xD_y'' <0]$, see
\cite[Theorem 3.3.1]{FMM}. We have 
\begin{equation*}
\bigl[ D_yD_x'' - D_xD_y'' <0 \bigr] \iff  \bigl[
\mu(2)(\mu(1)-\mu(2)) <0 \bigr] \iff \bigl[
\mu(1) < \mu(2) \bigr] \:.
\end{equation*}
\noindent Summarizing all of the above, we get the next proposition.

\begin{proposition}\label{pr-stabAB}
The stability regions under policies $A$ and $B$ are respectively: 
\begin{equation*}
 \textsc{Stab}(A) = {\textsc{Ncond}(G)}\cap \bigl\{ \mu(1)(1-\mu(1)) <\mu(2)^2 \bigr\};\,\,\,\quad  
\textsc{Stab}(B) = {\textsc{Ncond}(G)}\:.
\end{equation*}
\end{proposition} 

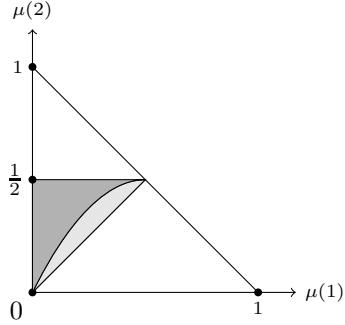
\begin{figure}[h!]
\begin{center}
\begin{tikzpicture}
\fill (0,0) circle (1.5pt) node[below left]{0} ;
\draw[->] (0,0) -- (0,3.5);
\fill (0,3.5) node[above]{\scriptsize{$\mu(2)$}} ;
\fill (0,3) circle (1.5pt) node[left]{\scriptsize{$1$}};
\draw[->] (0,0) -- (3.5,0);
\fill (3.5,0) node[right]{\scriptsize{$\mu(1)$}} ;
\fill (3,0) circle (1.5pt) node[below]{\scriptsize{$1$}};
\draw[-] (3,0) -- (0,3);
\draw[-] (0,0) -- (1.5,1.5);
\filldraw [fill=gray!20,draw=black]
(0,0) parabola[bend at end] (1.5,1.5) -- cycle;
\filldraw [fill=gray!60,draw=black]
(0,0) parabola[bend at end] (1.5,1.5) -- (0,1.5) -- cycle;
\fill (0,1.5) circle (1.5pt) node[left]{${1\over 2}$};
\end{tikzpicture}
\caption[smallcaption]{$\textsc{Stab}(A)$ is  the dark zone; $\textsc{Stab}(B)$ is the union of 
the dark and light zones.}\label{fi-zone}
\end{center}
\end{figure}

Using Lemma \ref{le-inclusions}, we can rephrase Proposition \ref{pr-stabAB} by saying that
policy $A$ has the smallest possible stability region, while policy
$B$ has the largest possible stability region. The two stability
regions are represented in Figure \ref{fi-zone}.

\end{document}